\numberwithin{equation}{section}
\theoremstyle{plain}
\newtheorem{theorem}{Theorem}[section]
\newtheorem{lemma}[theorem]{Lemma}
\newtheorem{proposition}[theorem]{Proposition}
\newtheorem{question}[theorem]{Question}
\theoremstyle{definition}
\newtheorem{definition}[theorem]{Definition}
\newtheorem{example}[theorem]{Example}
\theoremstyle{remark}
\newtheorem{remark}[theorem]{Remark}
\newcommand{\A}{\mathcal{A}}
\newcommand{\Z}{\mathbb{Z}}
\newcommand{\K}{\mathbb{K}}
\newcommand{\R}{\mathbb{R}}
\newcommand{\C}{\mathbb{C}}
\newcommand{\scL}{\mathcal{L}}
\newcommand{\eps}{\varepsilon}
\newcommand{\vf}{\theta}
\newcommand{\BM}{\operatorname{BM}}
\newcommand{\codim}{\operatorname{codim}}
\begin{document}

\title[Embedded sphere]{A construction of homotopically non-trivial 
embedded spheres for hyperplane arrangements}

\begin{abstract}
We introduce the notion of locally consistent system of 
half-spaces for a real hyperplane arrangement. 
By shifting the real unit sphere into the imaginary direction 
indicated by the half-spaces, we embed a sphere in the complexified 
complement. 
We then prove that the sphere is homotopically trivial 
if and only if the system 
of half-spaces is globally consistent. 
To prove its non-triviality, we compute the twisted intersection number 
of the sphere with a specific, explicitly constructed twisted 
Borel-Moore cycle. 
\end{abstract}

\author{Masahiko Yoshinaga}
\address{Masahiko Yoshinaga, 
Osaka University}
\email{yoshinaga@math.sci.osaka-u.ac.jp}


\subjclass[2010]{Primary 14N20, Secondary 52C35}


\keywords{Hyperplane arrangements, homotopy groups, 
$K(\pi, 1)$ space, local system homology}

\dedicatory{Dedicated to 
Prof. Enrique Artal Bartolo on the occasion of his 60th birthday}

\date{\today}

\maketitle

\tableofcontents

\section{Introduction}
\label{sec:intro}

A complex hyperplane arrangement $\A$ in $\C^\ell$ 
is called $K(\pi, 1)$ if the complement 
$M=\C^\ell\setminus\bigcup_{H\in\A}H$ 
is a $K(\pi, 1)$ space, that is, all higher homotopy 
groups vanish. Research on $K(\pi, 1)$-arrangements 
dates back to 1960's \cite{fox-neu}. 
In 1971, Brieskorn \cite{bri} raised a problem regarding the $K(\pi, 1)$ property 
of Coxeter and reflection arrangements. This question has inspired 
numerous significant works \cite{bessis, cha-dav, del-kpi1, fal-wt, nak, pao-sal}. 
For surveys on this topic, see \cite{fr-hom1, fr-hom2, par-conj}. 

However, not all hyperplane arrangements are $K(\pi, 1)$. Actually, 
Hattori \cite{hat-top} proved that 
generic hyperplane arrangements have non-vanishing 
higher homotopy groups, and hence 
are not $K(\pi, 1)$. Subsequently, 
several new non-$K(\pi, 1)$ arrangements have been discovered 
and studied \cite{ps-h, yos-twi}. 

However, we are still far from characterizing $K(\pi, 1)$ 
arrangements. There is no effective method to 
determine whether a hyperplane arrangement $\A$ is 
$K(\pi, 1)$. This problem remains challenging even for affine 
line arrangements in $\R^2$ 
(or equivalently, central arrangements in $\R^3$). 

One of the reasons for the difficulty in studying the homotopy 
groups of the complement $M$ is the vanishing of the 
Hurewicz map. Randell \cite{ran-hom} proved 
that the Hurewicz map 
\[
\pi_k(M)\longrightarrow H_k(M, \Z)
\]
always vanishes for $k\geq 2$. This means that even if 
the homotopy group $\pi_k(M)$ is not vanishing, it can not be 
detected by the homology group. 


In this paper, we develop a method to construct an embedded 
sphere in the complement and detecting non-triviality in the 
homotopy group. The paper is organized as follows. 

In \S \ref{sec:sys}, we introduce the systems of half-spaces 
and related notions. 
The notion of locally consistent 
system of half-spaces will play a key role in constructing 
an embedded sphere. 

In \S \ref{sec:sphere}, we first 
recall a useful way to describe the complex 
vector space $\C^\ell$ in terms of tangent vectors in $\R^\ell$, 
a method used in \cite{yos-lef} to study hyperplane arrangements. 
Using this description, for a given locally consistent system of 
half-spaces, we construct an embedded sphere in the complexified 
complement. The idea involves shifting real sphere to the imaginary 
direction specified by the system of half-spaces. The main 
result in this paper characterizes the system of half-spaces 
for which the resulting sphere is homotopically non-trivial. 

In \S \ref{sec:proof}, we prove the main result. The idea of the proof is 
to construct a twisted (local system coefficients) Borel-Moore 
homology cycle that has a non-zero intersection number with 
the embedded sphere.

In the final section \S \ref{sec:cor}, we formulate an 
obstruction for an arrangement to be $K(\pi, 1)$. 
We also provide several examples.

\section{Locally consistent systems of half-spaces}

\label{sec:sys}

\subsection{Signs and system of half-spaces}

\label{sec:signs}

Let $\A=\{H_1, \dots, H_n\}$ be a hyperplane arrangement 
in $V=\R^\ell$. 
We assume $\A$ is central and essential, meaning that 
$\bigcap_{i=1}^n H_i=\{0\}$. Fix a defining linear form 
$\alpha_i\in V^*$ to each hyperplane $H_i\in\A$. 
Define the half-space $H_i^+$ (resp. $H_i^-$) as 
$\{x\in V\mid \alpha_i(x)>0\ (\mbox{ resp. }\alpha_i(x)<0)\}$. 
The sign vector 
$\bm{\eps}=(\eps_1, \dots, \eps_n)\in\{\pm\}^n$ determines a 
system of half-spaces 
$h(\bm{\eps}):=(H_1^{\eps_1}, \dots, H_n^{\eps_n})$. 

\begin{definition}
The system of half-spaces $h(\bm{\eps})$ is \emph{consistent} 
(or \emph{globally consistent}) 
if $\bigcap_{i=1}^n H_i^{\eps_i}\neq\emptyset$. 
If not, we call $h(\bm{\eps})$ an inconsistent system. 
(See Figure \ref{fig:consistent}. The positive half-space is 
indicated by an arrow.) 
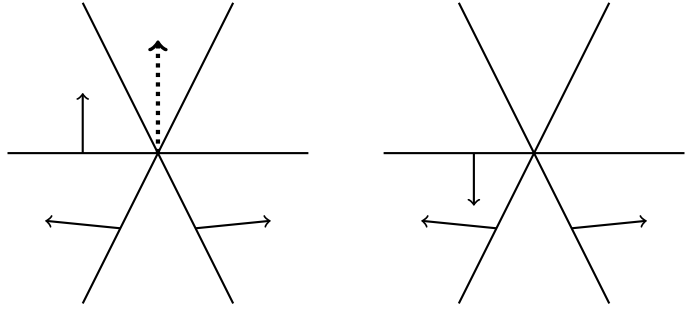
\begin{figure}[htbp]
\centering
\begin{tikzpicture}


\draw[thick] (1,0)--(3,4);
\draw[thick] (1,4)--(3,0);
\draw[thick] (0,2)--(4,2);

\draw[thick, ->] (1.5,1)--(0.5,1.1);
\draw[thick, ->] (2.5,1)--(3.5,1.1);
\draw[thick, ->] (1,2)--(1,2.8);
\draw[ultra thick, dotted, ->] (2,2)--(2,3.5);

\draw[thick] (6,0)--(8,4);
\draw[thick] (6,4)--(8,0);
\draw[thick] (5,2)--(9,2);

\draw[thick, ->] (6.5,1)--(5.5,1.1);
\draw[thick, ->] (7.5,1)--(8.5,1.1);
\draw[thick, ->] (6.2,2)--(6.2,1.3);

\end{tikzpicture}
\caption{Consistent (left) and inconsistent (right) system of 
half-spaces} 
\label{fig:consistent}
\end{figure}
\end{definition}
When $h(\bm{\eps})$ is consistent, the intersection 
$\bigcap_{i=1}^n H_i^{\eps_i}$ is clearly a chamber (a connected 
component of the complement $V\setminus\bigcup_{i=1}^nH_i$). 
Thus, a consistent system of half-spaces determines a 
chamber. 

Let $v\in V$ be a vector which is not contained in the union 
of hyperplanes $\bigcup_{i=1}^n H_i$. Then, one can associate 
a consistent system of half-spaces $h(\bm{\eps})$ such that 
$v\in H_i^{\eps_i}$ for each $i=1, \dots, n$. 
\begin{definition}
Let $h(\bm{\eps})$ be a system of half-spaces. A chamber $C$ 
is called a \emph{sink} if $C\subset H_i^{\eps_i}$ for each 
wall $H_i$ of $C$. 
\end{definition}
If $h(\bm{\eps})$ is consistent, then 
the intersection $\bigcap_{i=1}^n H_i^{\eps_i}$ is the unique sink. 
In general, one can prove the following. 
\begin{proposition}
\label{prop:sink}
Let $h(\bm{\eps})$ be a system of half-spaces. Then 
there exists a sink $C$. 
\end{proposition}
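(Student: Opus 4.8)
The plan is to start from an arbitrary chamber and walk across walls in a way that makes progress toward being a sink, showing the walk must terminate. Concretely, for a chamber $C$ call a wall $H_i$ of $C$ \emph{bad} if $C\subset H_i^{-\eps_i}$, i.e. $C$ lies on the wrong side of the half-space $h(\bm{\eps})$ assigns to $H_i$. A chamber is a sink precisely when it has no bad walls. First I would fix an auxiliary generic linear functional, or equivalently a generic vector $v_0\in V$ not on any hyperplane, and use it to orient the walk: starting from the chamber $C_0$ containing $v_0$, if $C_0$ is not a sink pick a bad wall $H_i$ and let $C_1$ be the chamber on the other side of $H_i$ from $C_0$. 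Repeat. The key point is to exhibit a quantity that strictly decreases (or increases) along this sequence of chambers, so that the walk cannot revisit a chamber and hence must stop — and it can only stop at a sink.

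The natural candidate for the monovariant is the number of hyperplanes separating the current chamber from the half-space system, that is, $d(C):=\#\{\,i : C\subset H_i^{-\eps_i}\,\}$, the count of bad walls-or-not-walls. When we cross a bad wall $H_i$ of $C_k$ to reach $C_{k+1}$, the sign of $\alpha_i$ flips (so $i$ goes from bad to good), and for every $j\neq i$ the chambers $C_k$ and $C_{k+1}$ lie on the same side of $H_j$; hence $d(C_{k+1})=d(C_k)-1$. Thus $d$ strictly decreases along the walk, it is bounded below by $0$, so after finitely many steps we reach a chamber with $d=0$. A chamber with $d(C)=0$ satisfies $C\subset H_i^{\eps_i}$ for \emph{every} $i$, in particular for every wall of $C$, so it is a sink. (In fact $d(C)=0$ forces $h(\bm\eps)$ to be consistent, recovering the remark that consistency gives the unique sink; in the inconsistent case the walk still terminates, but at a chamber which is a sink only with respect to its own walls.)

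The one subtlety I would be careful about — and what I expect to be the main obstacle — is justifying the crossing step: I must show that if $H_i$ is a bad wall of $C_k$, then the adjacent chamber $C_{k+1}$ across $H_i$ exists as a genuine chamber and that crossing a \emph{wall} changes the sign of exactly $\alpha_i$ and of no other $\alpha_j$. That a wall of a chamber admits an adjacent chamber on the other side is standard for arrangements, and the sign-change statement follows because the open segment joining an interior point of $C_k$ to an interior point of $C_{k+1}$ meets only the hyperplane $H_i$ (walls are facets of codimension one); one should note $C_k$ and $C_{k+1}$ need not have the same set of walls, but only the count $d$ matters and the argument above only uses that $\alpha_j$ does not change sign for $j\neq i$. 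Assembling these observations gives a finite, strictly monotone walk terminating at a sink, proving Proposition~\ref{prop:sink}.
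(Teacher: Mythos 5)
Your proposal is correct and takes essentially the same approach as the paper: the paper's proof defines a ``flow'' of chambers that repeatedly crosses a wall into the prescribed half-space and observes that such a flow crosses each hyperplane at most once, which is exactly the content of your strictly decreasing monovariant $d(C)$. The only cosmetic difference is that the paper bounds the length of the walk by $n$ directly rather than by $d(C_0)$, and (as your parenthetical correctly notes) the walk terminates at a chamber with no bad \emph{walls}, not necessarily with $d=0$.
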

\begin{proof}
Let us call a sequence of chambers $(C_0, C_1, \dots, C_k)$ 
a \emph{flow} with respect to $h(\bm{\eps})$ 
if $C_{p-1}$ and $C_p$ ($1\leq p\leq k$) are adjacent, separated 
by the unique hyperplane $H_{i_p}$, and $C_p\subset H_{i_p}^{\eps_{i_p}}$ 
(Figure \ref{fig:flow}). Any flow can cross a hyperplane 
$H\in\A$ at most once. Hence, any flow has length at most 
$n$, which means that any flow terminates at a sink. 
\begin{figure}[htbp]
\centering
\begin{tikzpicture}


\draw[thick] (1,-0.2)--(1,4);
\draw[thick] (0.9,-0.2)--(3,4);
\draw[thick] (0.5,1)--(5,1);
\draw[thick] (0.5,2.5)--(5,2.5);
\draw[thick] (3.5,0)--(1.5,4);
\draw[thick] (2.4,-0.2)--(4.5,4);
\draw[thick] (2,4)--(5,3);
\draw[thick] (2,0.5)--(5,2);

\draw[thick, ->] (1,2)--++(0.3,0) node[below]{\scriptsize $C_0$};
\draw[thick, ->] (1.5,2.5)--++(0,0.3);
\draw[thick, ->] (2.7,2.5)--++(0,0.3);
\draw[thick, ->] (2.7,2.5)--++(0,0.3);
\draw[thick, ->] (1.2,1)--++(0,0.3);
\draw[thick, ->] (2,1)--++(0,0.3) node[right]{\scriptsize $C_1$};
\draw[thick, ->] (2,2)--++(0.2,-0.2);
\draw[thick, ->] (2.5,2)--++(0.2,0.1) node[right]{\scriptsize $C_2$};
\draw[thick, ->] (3.5,2)--++(-0.2,0.1);
\draw[thick, ->] (2.5,3)--++(0.3,0) node[right]{\scriptsize $C_3$};
\draw[thick, ->] (3.5,3.5)--++(0,-0.3);
\draw[thick, ->] (4,3)--++(-0.3,0);
\draw[thick, ->] (4,1.5)--++(0.3,-0.1);


\end{tikzpicture}
\caption{The flow $(C_0, C_1, C_2, C_3)$ terminates at a sink.} 
\label{fig:flow}
\end{figure}
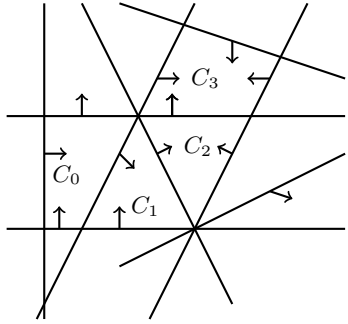
\end{proof}

\subsection{Locally consistent system of half-spaces}
\label{sec:loc}

Let 
$L(\A)=\left\{X=\bigcap_{H\in\mathcal{B}}H
\mid\mathcal{B}\subset\A\right\}$ be the set of intersections 
of a central hyperplane arrangement $\A=\{H_1, \dots, H_n\}$. 
\begin{definition}
\label{def:atX}
Let $X\in L(\A)$. 
Recall that 
$\A_X=\{H\in\A\mid H\supset X\}$ is the localization of $\A$ at $X$. 
We say a system of half-spaces $h(\bm{\eps})$ 
is \emph{consistent at $X$} if the subsystem 
$h(\bm{\eps}_X):=(H_i^{\eps_i}\mid H_i\in\A_X)$ is consistent. 
\end{definition}

\begin{remark}
Note that $h(\bm{\eps})$ is consistent at any hyperplane $H\in\A$. 
More generally, if $\A_X$ is Boolean, meaning $\A_X$ consists of 
independent hyperplanes, then any system of half-spaces 
$h(\bm{\eps})$ is consistent at $X$. 
\end{remark}

\begin{definition}
\label{def:locallycon}
Let $\A=\{H_1, \dots, H_n\}$ be a central essential arrangement. 
We say a system of hyperplanes $h(\bm{\eps})$ is 
\emph{locally consistent} if it is consistent at $X$ for 
any $X\in L(\A)\setminus\{0\}$. 
\end{definition}

\begin{example}
Consider the arrangement  $\A=\{H_1, H_2, H_3, H_4\}$ defined by 
$H_1=\{x=0\}, H_2=\{y=0\}, H_3=\{z=0\}, H_4=\{x+y+z=0\}$. 
Consider the system of half-spaces defined by the 
sign vector $\bm{\eps}=(+, +, +, -)$. 
Clearly, the intersection of half-spaces 
$x>0, y>0, z>0$, and $x+y+z<0$ is empty. Hence it is globally inconsistent. 

However, since any codimension $2$ intersection of $\A$ is 
Boolean, it is locally consistent (Figure \ref{fig:generic4}). 
The chamber $C$ determined by inequalities $x>0, y>0, z>0$ 
is a sink. 
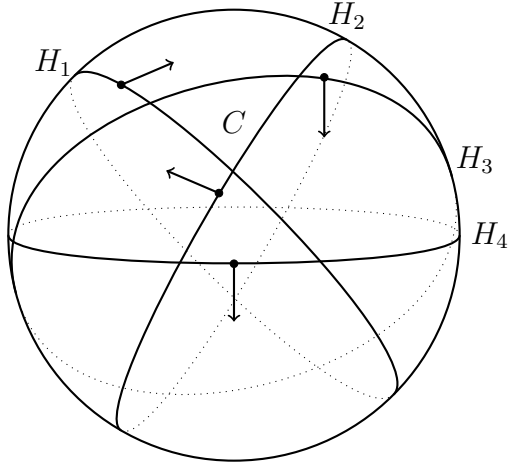
\begin{figure}[htbp]
\centering
\begin{tikzpicture}


\draw[thick] (0,0) circle (3);



\draw [thick] (-3,0) .. controls (-3,-0.5) and (3,-0.5) ..(3,0);
\draw [dotted, thin] (-3,0) .. controls (-3,0.5) and (3,0.5) ..(3,0);

\draw [dotted, thin, rotate=60] (-3,0) .. controls (-3,-0.6) and (3,-0.6) ..(3,0);
\draw [thick, rotate=60] (-3,0) .. controls (-3,0.6) and (3,0.6) ..(3,0);

\draw [dotted, thin, rotate=-45] (-3,0) .. controls (-3,-0.8) and (3,-0.8) ..(3,0);
\draw [thick, rotate=-45] (-3,0) .. controls (-3,0.8) and (3,0.8) ..(3,0);

\draw [dotted, thin, rotate=15] (-3,0) .. controls (-3,-2.7) and (3,-2.7) ..(3,0);
\draw [thick, rotate=15] (-3,0) .. controls (-3,2.7) and (3,2.7) ..(3,0);

\draw (3,0) node[right]{$H_4$}; 
\filldraw[fill=black, draw=black] (0,-0.38) circle (0.05);
\draw[thick, ->] (0,-0.35)--++(0,-0.8);

\draw (2.8,1) node[right]{$H_3$}; 
\filldraw[fill=black, draw=black] (1.2,2.1) circle (0.05);
\draw[thick, ->] (1.2,2.1)--++(0,-0.8);

\draw (1.5,2.6) node[above]{$H_2$}; 
\filldraw[fill=black, draw=black] (-0.2,0.56) circle (0.05);
\draw[thick, ->] (-0.2,0.56)--++(-0.7,0.3);

\draw (-2,2.3) node[left]{$H_1$}; 
\filldraw[fill=black, draw=black] (-1.5,2) circle (0.05);
\draw[thick, ->] (-1.5,2)--++(0.7,0.3);

\draw (0,1.5) node {$C$}; 

\end{tikzpicture}
\caption{A locally consistent system of half-spaces with a sink $C$.} 
\label{fig:generic4}
\end{figure}
\end{example}

\section{Embedded spheres}
\label{sec:sphere}

\subsection{Complexified complements}
\label{sec:cpx}

The complex vector space $\C^\ell=\R^\ell\otimes\C
=\R^\ell+\sqrt{-1}\cdot\R^\ell$ can be 
identified with the total space of the tangent bundle $T\R^\ell$ 
of the space $\R^\ell$ by the map: $v\in T_{x}\R^\ell
\longmapsto x+\sqrt{-1}\cdot v\in\C^\ell$. 

Let $H\subset V=\R^\ell$ be a hyperplane defined by a linear 
form $\alpha\in V^*$. Then the complex point 
$x+\sqrt{-1}\cdot v$ is contained in the 
complexified hyperplane $H_\C=H\otimes\C$ if and only if 
\[
\alpha(x+\sqrt{-1}\cdot v)=
\alpha(x)+\sqrt{-1}\cdot\alpha(v)=0, 
\]
which is equivalent to both the real part ($x$) and the imaginary part 
($v$) being contained in $H$. 

Let $\A=\{H_1, \dots, H_n\}$ be a central arrangement in 
$V=\R^\ell$. Using the above description, one can describe the 
complexified complement $M(\A)=\C^\ell\setminus
\bigcup_{i=1}^n H_\C$ as follows (see also Figure \ref{fig:compl}). 
\[
\{x+\sqrt{-1}\cdot v\mid \mbox{ if 
$x\in H_i$ for some $1\leq i\leq n$, then }
v\notin T_{x}H_i\}. 
\]
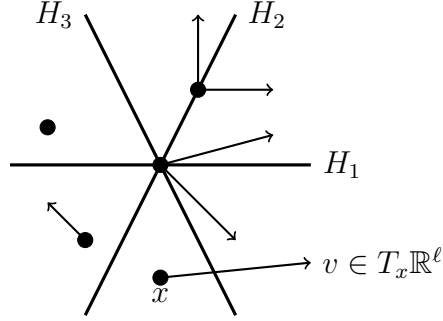
\begin{figure}[htbp]
\centering
\begin{tikzpicture}


\draw[very thick] (0,2)--(4,2) node[right] {$H_1$};
\draw[very thick] (1,0)--(3,4) node[right] {$H_2$};
\draw[very thick] (3,0)--(1,4) node[left] {$H_3$};

\filldraw[fill=black, draw=black] (0.5, 2.5) circle (0.1);

\filldraw[fill=black, draw=black] (1,1) circle (0.1);
\draw[thick, ->] (1,1)--++(-0.5,0.5);

\filldraw[fill=black, draw=black] (2,0.5) node[below] {${x}$} circle (0.1);
\draw[thick, ->] (2,0.5)--++(2,0.2) node[right] {${v}\in T_{x}\R^\ell$};

\filldraw[fill=black, draw=black] (2,2) circle (0.1);
\draw[thick, ->] (2,2)--++(1.5,0.4);
\draw[thick, ->] (2,2)--++(1,-1);

\filldraw[fill=black, draw=black] (2.5,3) circle (0.1);
\draw[thick, ->] (2.5,3)--++(1,0);
\draw[thick, ->] (2.5,3)--++(0,1);


\end{tikzpicture}
\caption{Vectors in the complexified complement.} 
\label{fig:compl}
\end{figure}

\subsection{System of half-spaces and sphere embeddings}
\label{sec:emb}

Let $\A=\{H_1, \dots, H_n\}$ be a central and essential 
hyperplane arrangement in $V=\R^\ell$. 
Let $\bm{\eps}=(\eps_1, \dots, \eps_n)$ be a 
locally consistent sign vector. 

Let $S=S^{\ell-1}=\{x\in\R^\ell\mid |x|=1\}$ 
be the unit sphere, and 
$D=D^\ell=\{x\in\R^\ell\mid |x|\leq 1\}$ 
be the unit ball in $V$. 
Recall that a vector field $\vf$ on $S$ is a continuous 
assignment of a tangent vector $\vf (x)\in T_x V$ to each 
$x\in S$. 

\begin{definition}
A vector field $\vf$ on $S$ is said to be \emph{compatible with 
a system of half-spaces $h(\bm{\eps})$} if, at each point 
$x\in S\cap H_i$ ($i=1, \dots, n$), 
$\vf(x)\neq 0$ and 
$\vf(x)$ directs towards the positive side $H_i^{\eps_i}$ of $H_i$ 
(Figure \ref{fig:vf}). 
\begin{figure}[htbp]
\centering
\begin{tikzpicture}


\draw[thick] (0,0) circle (2);

\draw[thick] (-3,0)--(3,0);
\draw[thick] (240:3)--(60:3);
\draw[thick] (120:3)--(300:3);

\filldraw[fill=black, draw=black] (1,0) circle (0.05);
\draw[thick, ->] (1,0)--++(0,-0.8);

\filldraw[fill=black, draw=black] (60:0.8) circle (0.05);
\draw[thick, ->] (60:0.8)--++(-0.1,0.8);

\filldraw[fill=black, draw=black] (120:0.8) circle (0.05);
\draw[thick, ->] (120:0.8)--++(0.1,0.8);



\draw[->] (0:2)--++(280:0.8);
\draw[->] (10:2)--++(300:0.9);
\draw[->] (20:2)--++(330:1);
\draw[->] (30:2)--++(0:1);
\draw[->] (40:2)--++(30:1);
\draw[->] (50:2)--++(60:1);
\draw[->] (60:2)--++(90:1);
\draw[->] (70:2)--++(90:1);
\draw[->] (80:2)--++(90:1);
\draw[->] (90:2)--++(90:1);
\draw[->] (100:2)--++(90:1);
\draw[->] (110:2)--++(90:1);
\draw[->] (120:2)--++(90:1);
\draw[->] (130:2)--++(100:0.7);
\draw[->] (140:2)--++(110:0.3);
\filldraw[fill=black, draw=black] (150:2) circle (0.05);
\draw[->] (160:2)--++(290:0.2);
\draw[->] (170:2)--++(310:0.4);
\draw[->] (180:2)--++(330:0.6);
\draw[->] (190:2)--++(350:0.6);
\draw[->] (200:2)--++(20:0.6);
\draw[->] (210:2)--++(40:0.6);
\draw[->] (220:2)--++(60:0.6);
\draw[->] (230:2)--++(80:0.6);
\draw[->] (240:2)--++(100:0.6);
\draw[->] (250:2)--++(100:0.4);
\draw[->] (260:2)--++(100:0.2);
\filldraw[fill=black, draw=black] (270:2) circle (0.05);
\draw[->] (280:2)--++(300:0.2);
\draw[->] (290:2)--++(330:0.4);
\draw[->] (300:2)--++(0:0.6);
\draw[->] (310:2)--++(340:0.7);
\draw[->] (320:2)--++(320:0.7);
\draw[->] (330:2)--++(300:0.7);
\draw[->] (340:2)--++(290:0.7);
\draw[->] (350:2)--++(280:0.7);

\end{tikzpicture}
\caption{A vector field $\vf$ on $S$ which is 
compatible with a system of half-spaces} 
\label{fig:vf}
\end{figure}
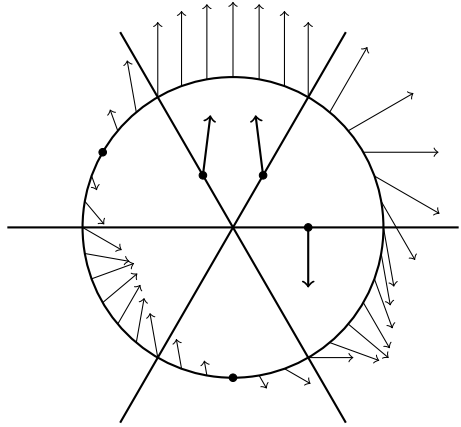
\end{definition}
Note that the local consistency of $h(\bm{\eps})$ implies 
that at each point $p\in S$, there exists a consistent 
vector field around $p$. 
Using the partition of unity, we can construct a consistent 
vector field $\theta$ on $S$. 

We construct an embedded sphere by shifting the real sphere 
$S\subset\R^\ell$ in the imaginary direction determined by the 
vector field $\vf$. 
\begin{definition}
Let $\vf$ be a vector field on $S$ compatible with $h(\bm{\eps})$. 
Define a subset $S(\bm{\eps})\subset\C^\ell$ as 
\begin{equation}
S(\bm{\eps}):=\{x+\sqrt{-1}\cdot \vf(x)\mid x\in S\}. 
\end{equation}
\end{definition}
Since the tangent vector $\vf(x)$ is not contained in $H_i$ for 
$x\in S\cap H_i$, $S(\bm{\eps})$ is an embedded sphere 
contained in the complexified complement $M(\A)$. We also 
note that the homotopy class $[S(\bm{\eps})]$ does not 
depend on the choice of the vector field $\vf$. Indeed, 
if $\vf_0$ and $\vf_1$ are vector fields on $S$ compatible with 
$h(\bm{\eps})$, then $\vf_t=(1-t)\vf_0+t\vf_1$ gives a family of 
vector fields compatible with $H(\bm{\eps})$, which provides a 
homotopy between the two embedded spheres. 

\subsection{Main result}
\label{sec:main}

We can characterize when the embedded sphere $S(\bm{\eps})$ 
is non-trivial in the homotopy group $\pi_{\ell-1}(M(\A))$. 

\begin{theorem}
\label{thm:main}
Let $\A=\{H_1, \dots, H_n\}$ be a central and essential 
hyperplane arrangement in $V=\R^\ell$. 
Let $h(\bm{\eps})$ be a 
locally consistent system of half-spaces. 
Then $S(\bm{\eps})$ is homotopically 
trivial if and only if $h(\bm{\eps})$ is globally consistent. 
\end{theorem}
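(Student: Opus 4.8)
The plan is to treat the two implications separately. For the easy direction, suppose $h(\bm{\eps})$ is globally consistent, so that $C=\bigcap_{i=1}^n H_i^{\eps_i}$ is a nonempty chamber. Pick a point $p\in C$. Since $C$ is open and convex and contains the origin of none of the arguments in an essential way, I would try to choose the compatible vector field $\vf$ on $S$ so that $\vf(x)$ always points ``toward $p$'' --- concretely, $\vf(x)=p$ (the constant vector field) works, because for $x\in S\cap H_i$ the vector $p$ lies in $H_i^{\eps_i}$ and hence directs to the positive side. Then $S(\bm\eps)=\{x+\sqrt{-1}\cdot p\mid x\in S\}$ is the boundary of the embedded ball $\{y+\sqrt{-1}\cdot p\mid y\in D\}$. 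One checks this ball lies entirely in $M(\A)$: a point $y+\sqrt{-1}p$ lies on $H_{i,\C}$ only if both $y\in H_i$ and $p\in H_i$, and $p\notin H_i$ for any $i$. Hence $S(\bm\eps)$ bounds a disk in $M(\A)$ and is null-homotopic.

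The substantive direction is the converse: if $h(\bm\eps)$ is \emph{not} globally consistent, then $[S(\bm\eps)]\neq 0$ in $\pi_{\ell-1}(M(\A))$. As the abstract indicates, the strategy is to exhibit a twisted Borel--Moore homology class $\sigma$ in $M(\A)$, with coefficients in a suitable rank-one local system $\scL$, such that the twisted intersection number $\langle [S(\bm\eps)], \sigma\rangle$ is nonzero. Since intersection number is a homotopy (in fact homology) invariant and the trivial class pairs to zero with everything, this forces $[S(\bm\eps)]\neq 0$. The Borel--Moore cycle I would build is modeled on the sink: by Proposition~\ref{prop:sink} there is a sink chamber $C$ for $h(\bm\eps)$, and when the system is inconsistent this sink is \emph{not} the globally consistent chamber, so $C$ is separated from the ``shifting direction'' by at least one hyperplane, which is exactly what will make the count nontrivial. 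Concretely I would take $\sigma$ to be (the closure of) $C$ shifted in a purely imaginary direction transverse to the real sphere --- something like $\{y+\sqrt{-1}\cdot tw\mid y\in \overline{C},\ t\geq 0\}$ for an appropriate $w$ --- or, following the local-system machinery for arrangement complements, a sum over flows terminating at $C$ with coefficients given by monodromy weights; the precise cycle is what \S\ref{sec:proof} must pin down.

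The key computation is then to show the two cycles $S(\bm\eps)$ and $\sigma$ meet transversally in $M(\A)$ and to evaluate the signed, $\scL$-weighted count of intersection points. The real parts of $S(\bm\eps)$ and of $\sigma$ are $S$ and $\overline C$ respectively, which intersect along $S\cap\overline C$; the imaginary-part conditions ($\vf(x)$ for the sphere, $tw$ for the cycle) will isolate finitely many transverse intersection points, one essentially for each vertex of $C$ lying on $S$, and one must check that with the local-system weights these contributions do not cancel --- here inconsistency is what prevents a global cancellation that would occur in the consistent case. \textbf{I expect this transversality-and-counting step, together with correctly setting up the twisted intersection pairing between compact homology and Borel--Moore homology with dual local systems, to be the main obstacle}; the bookkeeping of orientations and monodromy factors is delicate, and one has to verify the pairing is well-defined on homotopy classes (using the Hurewicz-type considerations and the fact that $\pi_{\ell-1}$ maps to homology of a suitable cover, not of $M(\A)$ itself, cf.\ Randell's vanishing result). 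The remaining verifications --- that $\sigma$ is genuinely a Borel--Moore cycle, i.e.\ its boundary lies at infinity or cancels, and that its class is nonzero in twisted Borel--Moore homology --- I would handle via the standard description of $H_*^{\BM}(M(\A),\scL)$ in terms of chambers, reducing to a nonvanishing statement about the ``sink'' basis element.
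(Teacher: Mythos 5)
Your ``if'' direction is correct and is essentially the paper's argument: the paper extends the compatible vector field over the ball and fills $S(\bm{\eps})$ with the disk $\{x+\sqrt{-1}\,\widetilde{\vf}(x)\mid x\in D\}$; your constant field $\vf\equiv p$ with $p$ in the consistent chamber is a legitimate special case, since the homotopy class is independent of the choice of compatible field. For the ``only if'' direction your high-level strategy (sink chamber plus a twisted intersection number against a Borel--Moore cycle) is the paper's, but the proof is not actually carried out, and the specific guesses you make would fail. First, your candidate cycle $\{y+\sqrt{-1}\,tw\mid y\in\overline{C},\ t\geq 0\}$ is not even contained in $M(\A)$: at $t=0$ the points of $\partial\overline{C}$ lie on the real hyperplanes, and the slice $t=0$ is a genuine boundary, so this is not a Borel--Moore cycle. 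The paper instead takes $W=\{x\cos\theta+\sqrt{-1}\,x\sin\theta\mid x\in C,\ 0\leq\theta\leq 2\pi\}$, i.e.\ the \emph{open} cone $C$ multiplied by the full circle of phases; one checks this is a closed $(\ell+1)$-dimensional submanifold of $M(\A)$ because whenever the real part lands on a hyperplane ($\cos\theta=0$) the imaginary part $\pm x$ does not.

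Second, the local system is where global inconsistency actually enters, and you leave it unspecified. The paper chooses rank-one monodromies $q_i$ with $\prod_{i=1}^n q_i=1$ (so that $\scL$ is trivial on $W$, which retracts to a loop encircling all hyperplanes) and $\prod_{i=k+1}^n q_i\neq 1$, where $H_{k+1},\dots,H_n$ are the hyperplanes with $C\not\subset H_i^{\eps_i}$; inconsistency guarantees this index set is nonempty, so such $q_i$ exist. Third, the intersection $W\cap S(\bm{\eps})$ is not indexed by vertices of $C$ on $S$: choosing the compatible field tangent to $S$ with exactly two Euler-type zeros at $x_0\in S\cap(-C)$ and $-x_0\in S\cap C$, one gets exactly two transverse intersection points with opposite local signs, and the twisted count is $1-\lambda$ with $\lambda=\prod_{i=k+1}^n q_i\neq 1$, the monodromy along the loop formed by a path in $W$ and a path in $S(\bm{\eps})$ from $x_0$ to $-x_0$. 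Without the twist the two points would cancel, which is consistent with Randell's vanishing; the well-definedness issue you raise is resolved by the twisted Hurewicz map $\pi_{\ell-1}(M,x_0)\otimes\scL^{\vee}_{x_0}\to H_{\ell-1}(M,\scL^{\vee})$, under which a null-homotopic sphere maps to zero. These three ingredients --- the correct cycle $W$, the two monodromy conditions, and the two-point computation --- constitute the substance of the proof and are missing from your proposal.
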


The if part of Theorem \ref{thm:main} is straightforward. If 
$h(\bm{\eps})$ is globally consistent, we can extend $\vf$ to 
the vector field $\widetilde{\vf}$ over the entire space 
such that $\vf(x)\in H_i^{\eps_i}$ whenever $x\in H_i$. 
Then we can fill the sphere $S(\bm{\eps})$ with the embedded disk 
\[
D(\bm{\eps})=\{x+\sqrt{-1}\cdot\widetilde{\vf}(x)\mid x\in D\}. 
\]
(Figure \ref{fig:ifpart}). Note that the disk is a cell in the 
Salvetti complex \cite{sal-top}. 
\begin{figure}[htbp]
\centering
\begin{tikzpicture}


\draw[thick] (0,0) circle (1.5);

\draw[thick] (-2.5,0)--(2.5,0);
\draw[thick] (240:2.5)--(60:2.5);
\draw[thick] (120:2.5)--(300:2.5);

\filldraw[fill=black, draw=black] (2,0) circle (0.05);
\draw[thick, ->] (2,0)--++(0,0.8);

\filldraw[fill=black, draw=black] (60:2) circle (0.05);
\draw[thick, ->] (60:2)--++(0,0.8);

\filldraw[fill=black, draw=black] (120:2) circle (0.05);
\draw[thick, ->] (120:2)--++(0,0.8);

\draw[->] (0:1.5)--++(90:0.4);
\draw[->] (10:1.5)--++(90:0.4);
\draw[->] (20:1.5)--++(90:0.4);
\draw[->] (30:1.5)--++(90:0.4);
\draw[->] (40:1.5)--++(90:0.4);
\draw[->] (50:1.5)--++(90:0.4);
\draw[->] (60:1.5)--++(90:0.4);
\draw[->] (70:1.5)--++(90:0.4);
\draw[->] (80:1.5)--++(90:0.4);
\draw[->] (90:1.5)--++(90:0.4);
\draw[->] (100:1.5)--++(90:0.4);
\draw[->] (110:1.5)--++(90:0.4);
\draw[->] (120:1.5)--++(90:0.4);
\draw[->] (130:1.5)--++(90:0.4);
\draw[->] (140:1.5)--++(90:0.4);
\draw[->] (150:1.5)--++(90:0.4);
\draw[->] (160:1.5)--++(90:0.4);
\draw[->] (170:1.5)--++(90:0.4);
\draw[->] (180:1.5)--++(90:0.4);
\draw[->] (190:1.5)--++(90:0.4);
\draw[->] (200:1.5)--++(90:0.4);
\draw[->] (210:1.5)--++(90:0.4);
\draw[->] (220:1.5)--++(90:0.4);
\draw[->] (230:1.5)--++(90:0.4);
\draw[->] (240:1.5)--++(90:0.4);
\draw[->] (250:1.5)--++(90:0.4);
\draw[->] (260:1.5)--++(90:0.4);
\draw[->] (270:1.5)--++(90:0.4);
\draw[->] (280:1.5)--++(90:0.4);
\draw[->] (290:1.5)--++(90:0.4);
\draw[->] (300:1.5)--++(90:0.4);
\draw[->] (310:1.5)--++(90:0.4);
\draw[->] (320:1.5)--++(90:0.4);
\draw[->] (330:1.5)--++(90:0.4);
\draw[->] (340:1.5)--++(90:0.4);
\draw[->] (350:1.5)--++(90:0.4);

\draw[->] (0:1)--++(90:0.4);
\draw[->] (30:1)--++(90:0.4);
\draw[->] (60:1)--++(90:0.4);
\draw[->] (90:1)--++(90:0.4);
\draw[->] (120:1)--++(90:0.4);
\draw[->] (150:1)--++(90:0.4);
\draw[->] (180:1)--++(90:0.4);
\draw[->] (210:1)--++(90:0.4);
\draw[->] (240:1)--++(90:0.4);
\draw[->] (270:1)--++(90:0.4);
\draw[->] (300:1)--++(90:0.4);
\draw[->] (330:1)--++(90:0.4);

\draw[->] (0:0.5)--++(90:0.4);
\draw[->] (60:0.5)--++(90:0.4);
\draw[->] (120:0.5)--++(90:0.4);
\draw[->] (180:0.5)--++(90:0.4);
\draw[->] (240:0.5)--++(90:0.4);
\draw[->] (300:0.5)--++(90:0.4);

\draw[->] (0:0)--++(90:0.4);

\end{tikzpicture}
\caption{Filling $S(\bm{\eps})$ by a disk.} 
\label{fig:ifpart}
\end{figure}
We will prove the only if part in the sequel. 

\section{Proofs}
\label{sec:proof}

\subsection{Twisted intersection numbers and the Hurewicz map}
\label{sec:twist}

In this section we recall basic facts about local system 
homology groups, particularly the twisted intersection numbers 
developed in \cite{ao-kit, kit-yos}. Let $M$ be a connected 
oriented $C^\infty$-manifold with $\dim_{\R}M=d$. 
We also assume that $M$ does not have the boundary. 
Let $\scL$ be a local system of $\K$-vector spaces on $M$. 
Denote the fiber at $x\in M$ by $\scL_x$. Then for each 
curve $\gamma:[0, 1]\longrightarrow M$, we have the 
isomorphism 
$\rho(\gamma):\scL_{\gamma(0)}\stackrel{\simeq}{\longrightarrow}
\scL_{\gamma(1)}$, which is called the parallel transport along 
$\gamma$. Note that the isomorphism $\rho(\gamma)$ 
depends only on the homotopy class of $\gamma$. 

Let $\scL^{\vee}$ is the dual local system. 
We describe explicitly the twisted intersection pairing 
\begin{equation}
\mathcal{I}: H_{d-k}(M, \scL^\vee)\otimes_{\K} H_k^{\BM}(M, \scL)
\longrightarrow\K
\end{equation}
for special cases. 
Let $W$ be a $k$-dimensional oriented closed submanifold of 
$M$ without boundary. 
Let $i:W\hookrightarrow M$ be the inclusion. 

Let $Z$ be a $(d-k)$-dimensional oriented compact manifold 
without boundary. Let $f:Z\longrightarrow M$ be a 
$C^\infty$-map that is transversal to $W$. 
For each $p\in f^{-1}(W)$, the local intersection number 
$I_p(f(Z), W)\in\{\pm 1\}$ is determined as 
\begin{equation}
I_p(f(Z), W)=
\begin{cases}
+1, & \mbox{ if }f_*T_pZ\oplus T_{f(p)}W \mbox{ is positively oriented}\\
-1, & \mbox{ if }f_*T_pZ\oplus T_{f(p)}W \mbox{ is negatively oriented.}
\end{cases}
\end{equation}
Let $\sigma\in H^0(W, i^*\scL)$. Then the fundamental 
cycle $[W]$ and $\sigma$ determine a twisted Borel-Moore 
cycle 
$i_*([W]\otimes\sigma)\in H_k^{\BM}(M, \scL)$. 
Similarly, $\tau\in H^0(Z, f^*\scL^\vee)$ determines 
$f_*([Z]\otimes\tau)\in H_{d-k}(M, \scL^\vee)$. 
For these two cycles, the twisted intersection number is described 
as follows. 
\begin{equation}
\label{eq:int}
\mathcal{I}(f_*([Z]\otimes\tau), i_*([W]\otimes\sigma))=
\sum_{p\in f^{-1}(W)}I_p(f(Z), W)\langle\tau(f(p)), \sigma(f(p))\rangle, 
\end{equation}
where $\langle -, -\rangle$ is the natural pairing 
between $\scL_{f(p)}^\vee$ and 
$\scL_{f(p)}$. 

Next let us recall the twisted Hurewicz map. Let $x_0\in M$ be 
the base point. If $k\geq 2$, then $S^k$ is simply connected. 
Therefore, for any continuous map $f:(S^k, *)\to (M, x_0)$, 
the pull-back $f^*\scL$ is a trivial local system. 
In particular, we have $H^0(S^k, f^*\scL)\simeq \scL_{x_0}$. 
Thus an element of $\scL_{x_0}$ gives a global section, and 
we have the following twisted Hurewicz map \cite{yos-twi}
\begin{equation}
\eta_{\scL, x_0}: \pi_{k}(M(\A), x_0)\otimes_\Z\scL_{x_0}
\longrightarrow
H_{k}(M(\A), \scL). 
\end{equation}

\subsection{Non-triviality of the sphere}
\label{sec:nontriv}

Now we return to the proof of 
Theorem \ref{thm:main}. 
Suppose that the system of half-spaces $h(\bm{\eps})$ 
is locally consistent, but globally inconsistent. 
We shall prove that $[S(\bm{\eps})]$ is 
not homotopically trivial. 

By Proposition \ref{prop:sink}, there exists a sink $C$. 
By definition, $C$ is contained in $H_i^{\eps_i}$ for all walls 
$H_i$ of $C$. By the assumption, there exists a hyperplane, 
say $H_n$, 
such that $C\not\subset H_n^{\eps_n}$. 
(Otherwise, $h(\bm{\eps})$ is globally consistent.) 

Let $x_0\in S\cap (-C)$. We can construct a vector field 
$\vf$ on $S$ satisfying the following properties (Figure \ref{fig:euler}). 
\begin{itemize}
\item[(i)] 
$\vf(x)\in T_xS$ for all $x\in S$. 
\item[(ii)] 
$\vf(\pm x_0)=0$, and $\pm x_0$ are the only zeros of $\vf$ on 
$(\mp C)\cap S$. 
\item[(iii)] 
Let $(z_1, \dots, z_{\ell-1})$ be a local coordinates of $S$ around 
$x_0\in S$, and the vector field $\vf$ is equal to 
the Euler vector field $\sum_{i=1}^{\ell-1}z_i\frac{\partial}{\partial z_i}$ 
around $x_0$. 
\item[(iv)] 
Let $(z_1, \dots, z_{\ell-1})$ be a local coordinates of $S$ around 
$-x_0\in S$, and the vector field $\vf$ is equal to the 
negative Euler vector field 
$-\sum_{i=1}^{\ell-1}z_i\frac{\partial}{\partial z_i}$ 
around $-x_0$. 
\end{itemize}
\begin{figure}[htbp]
\centering
\begin{tikzpicture}


\draw[thin] (0,0) circle (2.97);

\draw[thick] (-3.5,0)--(3.5,0) node[right] {$H_n$};
\draw[thick] (240:3.5)--(60:3.5);
\draw[thick] (120:3.5)--(300:3.5);

\filldraw[fill=black, draw=black] (2,0) circle (0.05);
\draw[thick, ->] (2,0)--++(0,-0.8);
\filldraw[fill=black, draw=black] (-2,0) circle (0.05);
\draw[thick, ->] (-2,0)--++(0,-0.8);

\filldraw[fill=black, draw=black] (60:2) circle (0.05);
\draw[thick, ->] (60:2)--++(0,0.8);
\filldraw[fill=black, draw=black] (240:2) circle (0.05);
\draw[thick, ->] (240:2)--++(0,0.8);

\filldraw[fill=black, draw=black] (120:2) circle (0.05);
\draw[thick, ->] (120:2)--++(0,0.8);
\filldraw[fill=black, draw=black] (300:2) circle (0.05);
\draw[thick, ->] (300:2)--++(0,0.8);

\draw (0,2) node {$C$}; 
\draw (0,-2) node {$-C$}; 
\draw (20:3) node[left] {$S$}; 
\filldraw[fill=black, draw=black] (0,2.97) node [above] {$-x_0$} circle (0.05);
\filldraw[fill=black, draw=black] (0,-2.97) node [above] {$x_0$} circle (0.05);

\filldraw[fill=black, draw=black] (30:2.97) circle (0.05);
\filldraw[fill=black, draw=black] (150:2.97) circle (0.05);
\filldraw[fill=black, draw=black] (210:2.97) circle (0.05);
\filldraw[fill=black, draw=black] (330:2.97) circle (0.05);

\draw[->, >=stealth] (0:3)--++(275:0.8);
\draw[->, >=stealth] (10:3)--++(285:0.6);
\draw[->, >=stealth] (20:3)--++(295:0.3);
\draw[->, >=stealth] (26:3)--++(300:0.2);

\draw[->, >=stealth] (34:3)--++(119:0.2);
\draw[->, >=stealth] (40:3)--++(125:0.3);
\draw[->, >=stealth] (50:3)--++(135:0.6);
\draw[->, >=stealth] (60:3)--++(145:0.6);
\draw[->, >=stealth] (70:3)--++(155:0.6);
\draw[->, >=stealth] (80:3)--++(165:0.3);
\draw[->, >=stealth] (85:3)--++(171:0.2);

\draw[->, >=stealth] (95:3)--++(9:0.2);
\draw[->, >=stealth] (100:3)--++(15:0.3);
\draw[->, >=stealth] (110:3)--++(25:0.6);
\draw[->, >=stealth] (120:3)--++(35:0.6);
\draw[->, >=stealth] (130:3)--++(45:0.6);
\draw[->, >=stealth] (140:3)--++(55:0.3);
\draw[->, >=stealth] (146:3)--++(61:0.2);

\draw[->, >=stealth] (154:3)--++(240:0.2);
\draw[->, >=stealth] (160:3)--++(245:0.3);
\draw[->, >=stealth] (170:3)--++(255:0.6);
\draw[->, >=stealth] (180:3)--++(265:0.8);

\draw[->, >=stealth] (190:3)--++(275:0.6);
\draw[->, >=stealth] (200:3)--++(285:0.3);
\draw[->, >=stealth] (205:3)--++(291:0.2);

\draw[->, >=stealth] (215:3)--++(129:0.2);
\draw[->, >=stealth] (221:3)--++(135:0.3);
\draw[->, >=stealth] (231:3)--++(145:0.6);
\draw[->, >=stealth] (241:3)--++(155:0.6);
\draw[->, >=stealth] (251:3)--++(165:0.6);
\draw[->, >=stealth] (261:3)--++(175:0.3);
\draw[->, >=stealth] (266:3)--++(181:0.2);

\draw[->, >=stealth] (274:3)--++(359:0.2);
\draw[->, >=stealth] (279:3)--++(5:0.3);
\draw[->, >=stealth] (289:3)--++(15:0.6);
\draw[->, >=stealth] (299:3)--++(25:0.6);
\draw[->, >=stealth] (309:3)--++(35:0.6);
\draw[->, >=stealth] (319:3)--++(45:0.3);
\draw[->, >=stealth] (325:3)--++(51:0.2);
\draw[->, >=stealth] (335:3)--++(247:0.2);
\draw[->, >=stealth] (340:3)--++(255:0.3);
\draw[->, >=stealth] (350:3)--++(265:0.6);

\draw[->, >=stealth] (0:3)--++(275:0.8);
\draw[->, >=stealth] (0:3)--++(275:0.8);
\draw[->, >=stealth] (0:3)--++(275:0.8);
\draw[->, >=stealth] (0:3)--++(275:0.8);

\end{tikzpicture}
\caption{A vector field tangent to $S$} 
\label{fig:euler}
\end{figure}
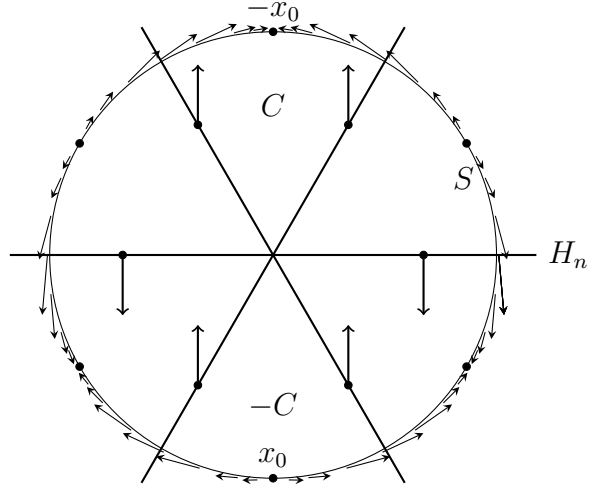

Now we define a $\C$-local system $\scL$ of rank one and 
a Borel-Moore cycle of degree $(\ell+1)$. Such a 
local system is determined by a homomorphism 
$H_1(M(\A), \Z)\to\C^\times$. Since $H_1(M(\A, \Z)$ is freely 
generated by meridians of each hyperplane $H_i$, the local system 
$\scL$ is determined by the local monodromies $q_i\in\C^\times$ 
around $H_i$ ($i=1, \dots, n$). 

Without loss of generality, we may assume that 
$C\subset H_i^{\eps_i}$ for $i=1, \dots, k$ and 
$C\not\subset H_i^{\eps_i}$ for $i=k+1, \dots, n$ 
(note that $1<k<n$). 
Let $(q_1, \dots, q_n)\in(\C^\times)^n$. Assume that 
$(q_1, \dots, q_n)$ satisfies the following conditions: 
\begin{equation}
\label{eq:condition}
\prod_{i=1}^nq_i=1, \ \ \ \mbox{ and } \ \ \ 
\prod_{i=k+1}^nq_i\neq 1.
\end{equation}
Such a local system exists, for example, if we take 
$q_i=e^{2\pi\sqrt{-1}/n}$ for all $i$, the condition is satisfied.   
Let $\scL$ be a rank one local system defined by 
such $(q_1, \dots, q_n)$. 

Now, we define a closed submanifold $W\subset M(\A)$ as follows: 
\[
W=\{x\cos\theta+\sqrt{-1}\cdot x\sin\theta\mid
x\in C, 0\leq\theta\leq 2\pi\}. 
\]
In other words, 
$x+\sqrt{-1}\cdot v$ in contained in $W$ if and only if 
\begin{itemize}
\item
$x=0$ with $v\in C\cup(-C)$, or 
\item 
$x\in C\cup(-C)$ with 
$v\in\R\cdot x$. 
\end{itemize}
(See Figure \ref{fig:W}.) 

\begin{figure}[htbp]
\centering
\begin{tikzpicture}



\draw[thick] (-2.5,0)--(2.5,0);
\draw[thick] (230:2.5)--(50:2.5);
\draw[thick] (130:2.5)--(310:2.5);

\draw (1,1.5) node[above] {$C$}; 

\draw[thin, dashed] (260:2.1)--(80:2.1);
\draw[thin, dashed] (115:2.2)--(295:2.2);

\filldraw[fill=black, draw=black] (0,0) circle (0.04);
\draw[thick, ->] (0,0)--++(80:0.7);
\draw[thick, ->] (0,0)--++(260:0.7);
\filldraw[fill=black, draw=black] (260:1.5) circle (0.04);
\draw[thick, ->] (260:01.5)--++(80:0.5);
\draw[thick, ->] (260:01.5)--++(260:0.5);

\filldraw[fill=black, draw=black] (80:0.9) circle (0.04);
\draw[thick, ->] (80:0.9)--++(80:0.7);

\filldraw[fill=black, draw=black] (115:1.3) circle (0.04);
\draw[thick, ->] (115:1.3)--++(115:0.7);
\draw[thick, ->] (115:1.3)--++(295:0.5);

\filldraw[fill=black, draw=black] (295:1) circle (0.04);
\draw[thick, ->] (295:1)--++(115:0.6);
\draw[thick, ->] (295:1)--++(295:0.8);

\end{tikzpicture}
\caption{The submanifold $W$} 
\label{fig:W}
\end{figure}
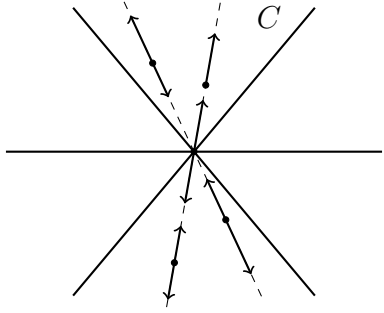

\begin{lemma}
\label{lem:triv}
The restrictions $\scL|_W$ and $\scL|_{S(\bm{\eps})}$ are trivial. 
\end{lemma}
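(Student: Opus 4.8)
The plan is to show both restrictions are trivial by exhibiting, in each case, a contractible (or at least simply connected) subset of $M(\A)$ containing the submanifold, so that the restriction of any local system becomes trivial. Recall that a local system on a path-connected, simply connected space is trivial, and that $\scL$ is determined by the monodromies $q_i$ around the meridians of the $H_i$; thus it suffices to check that loops in $W$ (resp. $S(\bm{\eps})$) are null-homotopic in $M(\A)$, equivalently that the meridian-winding numbers of every loop vanish.

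For $\scL|_{S(\bm{\eps})}$ this is immediate from what has already been established: $S(\bm{\eps})$ is the continuous image of the sphere $S=S^{\ell-1}$, and since $\ell\geq 2$ (the arrangement is essential in $V=\R^\ell$ and globally inconsistent, forcing $\ell\geq 2$), $S^{\ell-1}$ is simply connected. Hence $\pi_1(S(\bm{\eps}))$ maps trivially into $\pi_1(M(\A))$, every loop on $S(\bm{\eps})$ bounds in $M(\A)$, so all monodromies of $\scL|_{S(\bm{\eps})}$ are trivial and the pulled-back local system is trivial. (If $\ell=1$ the statement is vacuous since there are no inconsistent essential arrangements on the line.)

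For $\scL|_W$, I would analyze $W$ directly. By construction $W=\{x\cos\theta+\sqrt{-1}\,x\sin\theta\mid x\in C,\ 0\leq\theta\leq 2\pi\}$, which is the image of $C\times S^1$ (with $C$ convex, hence contractible) under the map $(x,\theta)\mapsto e^{\sqrt{-1}\theta}x$, after collapsing $\{0\}\times S^1$. Topologically $W$ is therefore homotopy equivalent to $S^1$, and its fundamental group is generated by a single loop, say $\gamma_0:\theta\mapsto e^{\sqrt{-1}\theta}x_*$ for a fixed $x_*\in C$. The key computation is the winding number of $\gamma_0$ around each hyperplane $H_i$: applying $\alpha_i$ gives $\alpha_i(\gamma_0(\theta))=e^{\sqrt{-1}\theta}\alpha_i(x_*)$, and since $x_*\in C$ lies in the open complement, $\alpha_i(x_*)\neq 0$, so $\alpha_i\circ\gamma_0$ is a loop winding exactly once around $0\in\C^\times$ — for \emph{every} $i$. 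Thus the monodromy of $\scL$ along $\gamma_0$ is $\prod_{i=1}^n q_i=1$ by the first condition in \eqref{eq:condition}. Since $\pi_1(W)$ is generated by $\gamma_0$, the restriction $\scL|_W$ has trivial monodromy, hence is trivial.

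The main point to be careful about is the identification of $\pi_1(W)$ and the verification that the distinguished generator $\gamma_0$ really does wind once around each $H_i$ — i.e. that $W$ genuinely avoids all complexified hyperplanes (so that $\gamma_0$ is a loop in $M(\A)$ at all) and that no cancellation occurs. The former follows from the description of $M(\A)$ in \S\ref{sec:cpx}: a point $x+\sqrt{-1}v$ with $x,v\in\R\cdot x$ and $x\in C$ satisfies $\alpha_i(x+\sqrt{-1}v)=(\text{nonzero})\cdot\alpha_i(x)\neq 0$, and the point $\sqrt{-1}v$ with $v\in C\cup(-C)$ has $\alpha_i(\sqrt{-1}v)=\sqrt{-1}\,\alpha_i(v)\neq 0$; so indeed $W\subset M(\A)$. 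The latter is the explicit winding-number computation above, which requires only that $C$ is a chamber, so $\alpha_i$ is nonvanishing on it. Because the product relation $\prod_{i=1}^n q_i=1$ is exactly the condition that the total monodromy around the generator of $\pi_1(W)$ is trivial, the lemma follows; note this is precisely why the first condition in \eqref{eq:condition} was imposed.
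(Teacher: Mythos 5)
Your treatment of $\scL|_W$ is correct and is essentially the paper's argument, made more explicit: $W\cong C\times S^1$ (no collapsing actually occurs, since the open chamber $C$ does not contain the origin), so $\pi_1(W)$ is generated by the loop $\gamma_0(\theta)=e^{\sqrt{-1}\theta}x_*$, and the computation $\alpha_i(\gamma_0(\theta))=e^{\sqrt{-1}\theta}\alpha_i(x_*)$ shows this loop winds once around every $H_{i,\C}$, so its monodromy is $\prod_{i=1}^n q_i=1$ by the first condition in (\ref{eq:condition}). This matches the paper.

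There is, however, a genuine gap in your argument for $\scL|_{S(\bm{\eps})}$: you assert that $\ell\geq 2$ makes $S^{\ell-1}$ simply connected, but $S^1$ is not simply connected, and $\ell=2$ is a legitimate (and non-vacuous) case of the theorem --- a central essential arrangement of $n\geq 3$ lines in $\R^2$ admits locally consistent, globally inconsistent systems of half-spaces, and then $S(\bm{\eps})$ is a loop whose class in $\pi_1(M(\A))$ is exactly what is at stake. The paper treats this case separately: when $\ell=2$ it observes that $S(\bm{\eps})$ is \emph{homologically} trivial, which suffices because $\scL$ is a rank-one local system and its monodromy along a loop depends only on the homology class. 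Concretely, the needed fact is that the winding number of $S(\bm{\eps})$ around each $H_{i,\C}$ is zero: writing the loop as $\alpha_i(x(\theta))+\sqrt{-1}\,\alpha_i(\vf(x(\theta)))$, the real part vanishes only at the two points of $S\cap H_i$, where compatibility of $\vf$ with $h(\bm{\eps})$ forces the imaginary part to have the \emph{same} sign at both crossings; hence the curve avoids one of the two imaginary half-axes and has winding number $0$. Your simple-connectivity argument is fine for $\ell\geq 3$, but as written the $\ell=2$ case is unproved, and the claim you use to dispose of it is false.
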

\begin{proof}[Proof of Lemma \ref{lem:triv}]
Since $W$ is homotopic to the circle $S^1$ which turns 
around all hyperplanes $H_1, \dots, H_n$, and by the 
first relation in (\ref{eq:condition}), 
$\scL$ does not have monodromy along this $S^1$, 
hence $\scL$ is trivial on $W$. 

If $\ell\geq 3$, then $S(\bm{\eps})\simeq S^{\ell-1}$ is simply 
connected. Therefore, the restriction on $S(\bm{\eps})$ is trivial. 
When $\ell =2$, the $S(\bm{\eps})\simeq S^1$ is homologically 
trivial. Hence $\scL$ does not have monodromy along 
$S(\bm{\eps})\simeq S^1$, 
hence $\scL$ is trivial on $S(\bm{\eps})$. 
\end{proof}

\begin{proof}[Proof of Theorem \ref{thm:main}]
By the constructions, $W\cap S(\bm{\eps})=\{\pm x_0\}$. 
They intersect transversally, and 
we can choose orientations of $S(\bm{\eps})$ and $W$ 
in such a way that $I_{x_0}(S(\bm{\eps}), W)=1$ and 
$I_{-x_0}(S(\bm{\eps}), W)=-1$. 
Let $e\in \scL_{x_0}$ and $e^*\in\scL_{x_0}^\vee$ be bases 
such that $\langle e, e^*\rangle=1$. 

Let $\gamma_1: [0, \pi]\to W$ be the curve defined by 
\[
\gamma_1(\theta)=\cos\theta\cdot x_0-\sqrt{-1}\sin\theta\cdot x_0. 
\]
Then $\gamma_1$ is a curve on $W$ from $x_0$ to $-x_0$. 

Let $\gamma_2:[0, pi]\to S(\bm{\eps})$ be the curve on 
$S(\bm{\eps})$ from $x_0$ to $-x_0$. Note that if $\ell\geq 3$, 
$\gamma_2$ is unique up to homotopy. When $\ell=2$, we choose 
$\gamma_2$ to be one of the half-circles (Figure \ref{fig:curves}). 
\begin{figure}[htbp]
\centering
\begin{tikzpicture}


\filldraw[fill=black, draw=black] (0,-1.5) node [below] {$x_0$} circle (0.05);
\filldraw[fill=black, draw=black] (0,1.5) node [above] {$-x_0$} circle (0.05);

\draw[dashed] (0,0) circle (1.5);

\draw[thick] (-2.5,0)--(2.5,0) node[right] {$H_n$};
\draw[thick] (220:2.5)--(40:2.5);
\draw[thick] (120:2.5)--(300:2.5);

\filldraw[fill=black, draw=black] (-2,0) circle (0.05);
\draw[thick, ->] (-2,0)--++(0,-0.8);

\filldraw[fill=black, draw=black] (40:2) circle (0.05);
\draw[thick, ->] (40:2)--++(0,0.8);

\filldraw[fill=black, draw=black] (120:2) circle (0.05);
\draw[thick, ->] (120:2)--++(0,0.8);

\draw[thick, ->] (0, -1.5)--++(0,0.4);
\draw[thick, ->] (0, -1)--++(0,0.4);
\draw (0, -0.7) node[left] {$\gamma_1$};
\draw[thick, ->] (0, -0.5)--++(0,0.4);
\draw[thick, ->] (0, 0)--++(0,0.4);
\draw[thick, ->] (0, 0.5)--++(0,0.4);
\draw[thick, ->] (0, 1)  --++(0,0.4);

\draw[thick] (0, -1.5) arc [radius=1.5, start angle = -90, end angle=90];

\draw[thick, ->] (300:1.5) -- ++(25:0.6);
\draw[thick, ->] (0:1.5) -- ++(275:0.6) node[right] {$\gamma_2$};
\draw[thick, ->] (40:1.5) -- ++(125:0.6);

\end{tikzpicture}
\caption{Curves $\gamma_1$ and $\gamma_2$.} 
\label{fig:curves}
\end{figure}
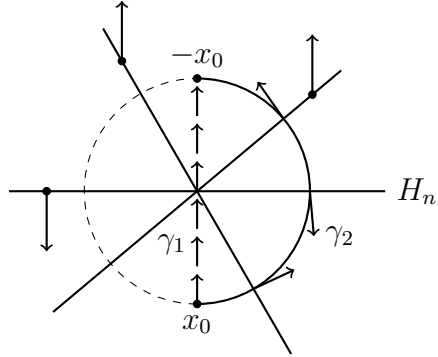

Consider the closed curve $\gamma_1^{-1}\cdot\gamma_2$. This 
curve turns each of $H_{k+1}, \dots, H_n$ positively. Hence the 
monodromy of $\scL$ along $\gamma_1^{-1}\cdot\gamma_2$ is 
$\lambda::=\prod_{i=k+1}^n q_i\neq 1$. 

Now let us compute the twisted intersection number 
$\mathcal{I}([W]\otimes e, [S(\bm{\eps})]\otimes e^*)$. 
By the formula (\ref{eq:int}), we have 
\[
\begin{split}
\mathcal{I}([W]\otimes e, [S(\bm{\eps})]\otimes e^*)&=
\langle e, e^*\rangle -
\langle \rho(\gamma_1)e, \rho(\gamma_2)e^*\rangle\\
&=
\langle e, e^*\rangle -
\langle \rho(\gamma_1)e, \lambda \rho(\gamma_1)e^*\rangle\\
&=1-\lambda\neq 0. 
\end{split}
\]
Hence 
$[S(\bm{\eps})]\otimes e^*\in H_{\ell-1}(M(\A), \scL^\vee)$ and 
$[W]\otimes e\in H_{\ell+1}^{\BM}(M(\A), \scL)$ 
are both non-trivial, and 
$S(\bm{\eps})$ is not homotopically trivial. 
\end{proof}

\section{Non-$K(\pi, 1)$ arrangements}
\label{sec:cor}

\subsection{An obstruction to $K(\pi, 1)$}
\label{sec:obst}

Let $\A=\{H_1, \dots, H_n\}$ be a central essential arrangement 
in $V=\R^\ell$. 

For $1\leq k\leq \ell$ let us define $\Sigma_k$ as 
\[
\Sigma_k:=\{h(\bm{\eps})\mid h(\bm{\eps})\mbox{ is consistent 
at any $X\in L(\A)$ with }\codim X\leq k\}. 
\]
Note that $\Sigma_1$ is the set of all 
systems of half-spaces and $\Sigma_\ell$ is the set of all globally 
consistent system of half-spaces. So, $\#\Sigma_1=2^n$ and 
$\#\Sigma_\ell$ is equal to the number of chambers. 
We have a natural filtration 
\begin{equation}
\Sigma_1\supset \Sigma_2\supset \cdots \supset \Sigma_{\ell-1}
\supset \Sigma_\ell. 
\end{equation}
A system $h(\bm{\eps})$ is 
consistent (resp. locally consistent) 
if and only if $h(\bm{\eps})\in\Sigma_{\ell}$. 
(resp. $h(\bm{\eps})\in\Sigma_{\ell-1}$). 
The cardinalities of these sets reflect the structure of 
the homotopy groups. 

\begin{theorem}
\label{thm:obst}
\begin{itemize}
\item[(1)] 
Suppose that there exists $k\geq2$ satisfying 
\begin{equation}
\label{eq:gap}
\Sigma_k\supsetneq\Sigma_{k+1}. 
\end{equation}
Then $\pi_k(M(\A))\neq 0$. 
\item[(2)] 
If $M(\A)$ is $K(\pi, 1)$ space, then 
\begin{equation}
\label{eq:equal}
\Sigma_2=\Sigma_3=\dots =\Sigma_\ell. 
\end{equation}
\end{itemize}
\end{theorem}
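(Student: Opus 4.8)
The plan is to derive both parts from Theorem \ref{thm:main} by passing to a localization. Part (2) is in fact an immediate consequence of part (1): if $M(\A)$ is $K(\pi,1)$ then $\pi_k(M(\A))=0$ for every $k\geq 2$, so the contrapositive of (1) rules out $\Sigma_k\supsetneq\Sigma_{k+1}$ for all $k$ with $2\leq k\leq\ell-1$; since $\Sigma_2\supset\Sigma_3\supset\cdots\supset\Sigma_\ell$ is a decreasing filtration, this forces $\Sigma_2=\Sigma_3=\cdots=\Sigma_\ell$. Hence the whole content lies in part (1).

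For (1), assume $\Sigma_k\supsetneq\Sigma_{k+1}$ and choose $h(\bm{\eps})\in\Sigma_k\setminus\Sigma_{k+1}$. Since the condition defining $\Sigma_{k+1}$ only adds consistency at flats of codimension exactly $k+1$, there is an $X\in L(\A)$ with $\codim X=k+1$ at which $h(\bm{\eps})$ is not consistent, while $h(\bm{\eps})$ remains consistent at every flat of codimension $\leq k$. Let $\A'$ be the essentialization of the localization $\A_X$: it is a central essential arrangement in $\R^{k+1}$, and $Z\mapsto Z/X$ identifies $\{Z\in L(\A)\mid Z\supseteq X\}$ with $L(\A')$, the flat $X$ itself corresponding to the center of $\A'$. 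Write $\bm{\eps}'$ for the restriction of $\bm{\eps}$ to the hyperplanes of $\A_X$, viewed as a sign vector for $\A'$. I claim $h(\bm{\eps}')$ is locally consistent but globally inconsistent for $\A'$. Indeed, for $Z\supsetneq X$ one has $\codim_V Z\leq k$, and since $(\A_X)_Z=\A_Z$ the consistency of $h(\bm{\eps})$ at $Z$ is literally the consistency of $h(\bm{\eps}')$ at $Z/X$; as $Z$ ranges over the flats strictly containing $X$, $Z/X$ ranges over all flats of $\A'$ other than its center, which gives local consistency. Global inconsistency is exactly the failure of consistency at $X$. Thus Theorem \ref{thm:main} applies to $\A'$ and produces an embedded sphere $S(\bm{\eps}')\subset M(\A')$ with $[S(\bm{\eps}')]\neq 0$ in $\pi_{k}(M(\A'))$ (note $\dim_{\R}\R^{k+1}-1=k$).

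It remains to carry this nontrivial class into $\pi_k(M(\A))$. Choose coordinates with $X=\{0\}\times\R^{\ell-k-1}$ inside $\R^{k+1}\times\R^{\ell-k-1}=\R^\ell$; then $\A_X=\{H'\times\R^{\ell-k-1}\mid H'\in\A'\}$ and $M(\A_X)=M(\A')\times\C^{\ell-k-1}$. Since no hyperplane of $\A\setminus\A_X$ contains $X$, each such hyperplane meets $X$ in a proper subspace, so a generic $q\in\R^{\ell-k-1}$ has $(0,q)\notin H$ for all $H\in\A\setminus\A_X$. Fix such a $q$ and a vector field $\theta'$ on $S^k$ compatible with $h(\bm{\eps}')$. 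For all sufficiently small $\delta>0$ the set
\[
S_\delta:=\bigl\{\,\bigl(\delta x+\sqrt{-1}\,\delta\,\theta'(x),\ q\bigr)\ \bigm|\ x\in S^k\,\bigr\}
\]
lies in $M(\A)$: its image in $\C^{k+1}$ is $\delta\cdot S(\bm{\eps}')\subset M(\A')$, so $S_\delta$ meets no $H_\C$ with $H\in\A_X$; and for $H\in\A\setminus\A_X$ the real part $(\delta x,q)$ tends uniformly in $x\in S^k$ to $(0,q)\notin H$ as $\delta\to0$, so for small $\delta$ the point avoids $H_\C$ as well. Finally, the inclusion $M(\A)\hookrightarrow M(\A_X)$ followed by the first projection $M(\A_X)=M(\A')\times\C^{\ell-k-1}\to M(\A')$ carries $[S_\delta]$ to $[\delta\cdot S(\bm{\eps}')]$, which equals $[S(\bm{\eps}')]\neq0$ because rescaling by positive reals (together with the independence of $[S(\bm{\eps}')]$ on the compatible vector field noted after its definition) is a homotopy inside $M(\A')$. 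Therefore $[S_\delta]\neq0$ in $\pi_k(M(\A))$, completing the proof of (1).

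The main obstacle is the localization bookkeeping in the middle paragraph: one must verify precisely that ``consistent at all flats of codimension $\leq k$ but inconsistent at some flat of codimension $k+1$'' is equivalent to ``locally consistent but globally inconsistent'' for the essentialized localization in dimension $k+1$, and then that the sphere handed back by Theorem \ref{thm:main} for this smaller arrangement can be realized inside the genuinely smaller complement $M(\A)$ with its class still nonzero --- the product decomposition $M(\A_X)=M(\A')\times\C^{\ell-k-1}$ and the shrinking parameter $\delta$ are exactly what make this last step go through.
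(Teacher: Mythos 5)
Your proof is correct and follows the same route as the paper's (which is only a two-sentence sketch): pick $h(\bm{\eps})\in\Sigma_k\setminus\Sigma_{k+1}$, locate a codimension-$(k+1)$ flat $X$ where consistency first fails, and apply Theorem \ref{thm:main} to the essentialized localization $\A_X$. Your additional details --- the product decomposition $M(\A_X)=M(\A')\times\C^{\ell-k-1}$ and the shrinking/projection argument that carries the non-trivial class back into $\pi_k(M(\A))$ --- correctly fill in steps the paper leaves implicit.
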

\begin{proof}
(1) There exist a system of half-spaces $h(\bm{\eps})$ and 
an intersection $X\in L(\A)$ with $\codim X=k+1$ 
such that $h(\bm{\eps})$ is inconsistent at $X$ but 
consistent for any $Y\supsetneq X$. Theorem \ref{thm:main} 
enables us to construct a $k$-dimensional sphere that is not 
homotopically trivial. 

(2) is easily obtained from (1). 
\end{proof}

\begin{remark}
The condition (\ref{eq:equal}) 
$\Sigma_2=\cdots=\Sigma_\ell$ is closely related to 
the notion of biclosed subset of vectors and clean arrangements 
in \cite{mcc-bic, bar-spe}. It was proved in \cite{mcc-bic} 
that supersolvable arrangements and simplicial arrangements 
satisfy the condition (\ref{eq:equal}). 
Since supersolvable and simplicial arrangements are known to be 
$K(\pi, 1)$, Theorem \ref{thm:obst} (2) generalizes the result. 
\end{remark}


\subsection{Examples}
\label{sec:ex}

\begin{proposition}
\label{prop:generic}
Let $\A=\{H_1, \dots, H_n\}$ be a central essential arrangement 
in $V=\R^\ell$. Let $\mathcal{B}=\{H_{n+1}, \dots, H_{n+k}\}$ be 
an any central arrangement. Then for generic $g\in GL(V)$, 
$\A\cup g\mathcal{B}=\{H_1, \dots, H_n, g(H_{n+1}), \dots, 
g(H_{n+k})$\} is non-$K(\pi, 1)$. 
\end{proposition}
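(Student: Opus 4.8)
The plan is to produce, for a generic $g\in GL(V)$, a sign vector $\bm{\eps}$ on $\A\cup g\mathcal B$ that is consistent at every proper intersection but globally inconsistent; then Theorem~\ref{thm:main} yields a homotopically non-trivial embedded sphere of dimension $\ell-1$ in the complement, forcing $\pi_{\ell-1}(M(\A\cup g\mathcal B))\neq 0$. The first step is to arrange that $\A\cup g\mathcal B$ is still central and essential (it is, since $\A$ already is), and to observe that for generic $g$ the only intersections $X\in L(\A\cup g\mathcal B)$ with $\codim X\leq \ell-1$ through which more than $\ell-1$ of the hyperplanes pass are forced by $\A$ alone — i.e. the new hyperplanes $g(H_{n+j})$ are in ``general position'' relative to $\A$ and to each other, so that every $X$ with $\codim X=\ell-1$ has Boolean localization \emph{unless} it is a non-Boolean flat of $\A$ itself. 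This is the place where genericity is used: for $g$ outside a proper closed subvariety of $GL(V)$, any $\ell-1$ of the hyperplanes in the combined arrangement that are not all among $\A$ intersect in codimension $\ell-1$ transversally.

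Second, I would construct the sign vector. Pick any generic vector $v_0\in\R^\ell$ and let $\bm{\eps}$ be the consistent sign vector it determines on $\A$, giving the chamber $C_0\ni v_0$ of $\A$. Since $\ell\geq 2$ and $\mathcal B$ is non-empty, choose one new hyperplane, say $g(H_{n+1})$, and assign it the sign $\eps_{n+1}$ \emph{opposite} to the side of $v_0$; all remaining new hyperplanes get the sign determined by $v_0$. Then $\bigcap$ of all the chosen half-spaces is empty (the chamber $C_0$ of $\A$ lies on the wrong side of $g(H_{n+1})$, and no other point can simultaneously satisfy the $\A$-inequalities cutting out $C_0$ and be on the $v_0$-side of the other new hyperplanes while being on the opposite side of $g(H_{n+1})$ — here one must check that $C_0$ actually meets $g(H_{n+1})$, which holds generically because $C_0$ is an open cone and $g(H_{n+1})$ a generic hyperplane through $0$). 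Hence $h(\bm{\eps})$ is globally inconsistent.

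Third, I must verify local consistency: $h(\bm{\eps})$ is consistent at every $X\in L(\A\cup g\mathcal B)\setminus\{0\}$. If $\codim X\leq \ell-1$ and $X$ is a flat at which at most $\ell-1$ hyperplanes meet, generically its localization is Boolean (by the Step~1 genericity), so $h(\bm{\eps})$ is automatically consistent at $X$ by the Remark following Definition~\ref{def:atX}. The only flats to worry about are the codimension-$(\ell-1)$ non-Boolean flats of $\A$ — but at such a flat $X$ only hyperplanes of $\A$ pass (again by genericity), so $h(\bm{\eps}_X)$ is just the restriction of the $\A$-sign vector coming from $v_0$, which is consistent (it is cut out by $v_0$ itself). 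Thus $h(\bm{\eps})$ is locally consistent. Applying Theorem~\ref{thm:main} gives a non-trivial class in $\pi_{\ell-1}(M(\A\cup g\mathcal B))$; since a $K(\pi,1)$ complement of an essential arrangement in $\C^\ell$ has vanishing $\pi_k$ for $2\leq k\leq \ell-1$ (its universal cover is contractible and the complement has the homotopy type of an $\ell$-dimensional complex, so non-vanishing $\pi_{\ell-1}$ obstructs $K(\pi,1)$ once $\ell\geq 3$; when $\ell=2$ one argues directly since an essential rank-$2$ central arrangement has $n\geq 3$ lines and the above sphere is a non-trivial $S^1$ only if $\pi_1$ is non-abelian-detecting — actually here $\ell-1=1$ and the statement is vacuous/trivially true as rank-$2$ arrangements are always $K(\pi,1)$, so we may assume $\ell\geq 3$), we conclude $\A\cup g\mathcal B$ is non-$K(\pi,1)$.

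The main obstacle is Step~1: making the genericity statement precise and checking that for $g$ in a Zariski-dense open subset of $GL(V)$ the combined arrangement has no ``accidental'' non-Boolean flats of codimension $\leq \ell-1$ other than those of $\A$. This is a standard dimension-count — each bad coincidence is a proper closed condition on $g$ — but it requires care to phrase cleanly, especially the claim that the chamber $C_0$ of $\A$ genuinely straddles the generic hyperplane $g(H_{n+1})$ (equivalently, $C_0$ is not contained in one open half-space of a generic hyperplane through the origin, which holds because $C_0$ is a full-dimensional cone with $0$ in its closure). Once that is in place, Steps~2 and~3 and the final invocation of Theorem~\ref{thm:main} are routine.
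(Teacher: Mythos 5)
Your Step~1 genericity claim is false, and this breaks Step~3. Applying a \emph{single} $g\in GL(V)$ to all of $\mathcal{B}$ does not change the intersection lattice of $\mathcal{B}$: if $\mathcal{B}$ has a non-Boolean flat (say three hyperplanes through a common codimension-$2$ subspace), then so does $g\mathcal{B}$ for \emph{every} $g$. Genericity only places $L(\A)$ and $L(g\mathcal{B})$ in general position with respect to each other; it cannot make the new hyperplanes mutually generic. Consequently your sign vector --- every hyperplane signed toward $v_0$ except $g(H_{n+1})$, which is flipped --- can fail to be locally consistent at a non-Boolean flat $X\in L(g\mathcal{B})$ whose localization contains $g(H_{n+1})$: for instance, if $(g\mathcal{B})_X$ is a rank-$2$ arrangement of three hyperplanes and $g(H_{n+1})$ is not a wall of the chamber of $(g\mathcal{B})_X$ containing $v_0$, flipping its sign yields one of the two inconsistent sign vectors of that local arrangement. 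The paper avoids this by taking the signs on the $g\mathcal{B}$-part from a chamber $C_1$ of $g\mathcal{B}$ itself (hence consistent at every flat of $L(g\mathcal{B})$) and the signs on the $\A$-part from a chamber $C_0$ of $\A$, so that the inconsistency is created only \emph{between} the two sub-arrangements, exactly where the transversality of $L(\A)$ and $L(g\mathcal{B})$ applies.

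Second, your global-inconsistency step is internally contradictory. To empty the total intersection by flipping the sign of $g(H_{n+1})$, you need the cone cut out by all the \emph{other} chosen half-spaces to lie entirely on the $v_0$-side of $g(H_{n+1})$ --- that is, you need $g(H_{n+1})$ to \emph{miss} $C_0$, not to meet it as you assert; if $g(H_{n+1})$ cuts through $C_0$, the flipped system is still globally consistent. Moreover, ``$g(H_{n+1})$ misses $C_0$'' is a Euclidean-open but not Zariski-dense condition on $g$ (the hyperplanes missing a pointed full-dimensional cone form a set with non-empty interior), so it cannot be swept into ``generic $g$'' in the Zariski sense. This is precisely why the paper invokes the stability of the topological type of $M(\A\cup g\mathcal{B})$ under lattice-isotopy (Randell; Nazir--Yoshinaga): one checks non-$K(\pi,1)$ for one conveniently positioned $g$ in the Zariski-open transversality locus and transfers the conclusion to all of it. Your proposal omits this transfer step, and without it the word ``generic'' in the statement is not justified.
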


\begin{proof}
We say that two linear subspaces $X_1, X_2\subset V$ are in 
general position if either 
\begin{itemize}
\item
$\dim X_1+\dim X_2<\ell$ and $X_1\cap X_2=\{0\}$, or 
\item 
$\dim X_1+\dim X_2\geq \ell$ and $\dim X_1\cap X_2=
\dim X_1+\dim X_2-\ell$.  
\end{itemize}
Note that there exists a non-empty Zariski open subset $U$ 
of $GL(V)$ such that for any $X\in L(\A)$ and 
$Y\in L(g\mathcal{B})$, $X$ and $Y$ are in general position. 
The topological type of the complexified complement 
$M(\A\cup g\mathcal{B})$ is stable for generic $g\in GL(V)$ 
\cite{ran-iso, naz-yos}. 

Based on Theorem \ref{thm:main}, it is sufficient to construct 
a system of half-spaces that is locally consistent but 
globally inconsistent. 
We first choose a chamber $C_0$ of $\A$ and 
choose half-spaces such that 
$C_0=H_1^{\eps_1}\cap \dots, \cap H_n^{\eps_n}$. 
Now take $g\in GL(V)$ in such a way that $gH_{n+1}$ does not 
separate $C_0$ and choose a half-space satisfying 
$C_0\not\subset gH_{n+1}^{\eps_{n+1}}$. Choose a chamber 
$C_1$ of $\mathcal{B}$ and half-spaces satisfying 
$C_1=gH_{n+1}^{\eps_{n+1}}\cap\cdots\cap gH_{n+k}^{\eps_{n+k}}$. 
By the transversality of $\A$ and $g\mathcal{B}$, the system of 
half-spaces 
$(H_i^{\eps_i}\mid i=1, \dots, n+k)$ 
is locally consistent but globally inconsistent. 
\end{proof}

Proposition \ref{prop:generic} recovers known non-$K(\pi, 1)$ 
arrangements. 

\begin{example}
The following arrangements are not $K(\pi, 1)$. 
\begin{itemize}
\item
(Hattori \cite{hat-top}) Let $\A=\{H_1, \dots, H_n\}$ be a generic 
central arrangement in $\R^\ell$ with $n>\ell$. 
\item 
(Papadima, Suciu \cite{ps-h}, Yoshinaga \cite{yos-twi}) 
Let $\A$ be a central essential arrangement in $V=\R^\ell$. 
Let $F\subset V$ be a generic (linear) hyperplane. Then 
$\A\cup\{F\}$ is not $K(\pi, 1)$. 
\end{itemize}
\end{example}

As we saw in Theorem \ref{thm:obst}, 
$K(\pi, 1)$-property implies 
the condition (\ref{eq:equal}) $\Sigma_2=\Sigma_\ell$. 
However, the converse does not hold in general. 
\begin{example}
\label{ex:falk}
(\cite[\S 3.4]{fr-hom2}) 
Let $X_2=\{H_1, \dots, H_6\}$ 
be the affine line arrangement as in Figure \ref{fig:falk}. 
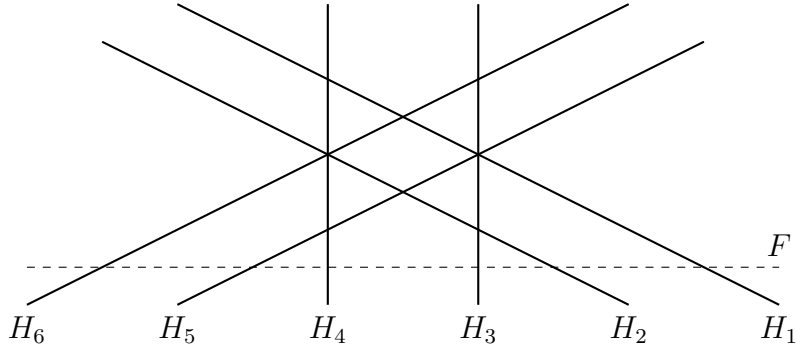
\begin{figure}[htbp]
\centering
\begin{tikzpicture}


\draw[thick] (0,0) node[below] {$H_6$} -- ++(8,4);
\draw[thick] (2,0) node[below] {$H_5$} -- ++(7,3.5);
\draw[thick] (4,0) node[below] {$H_4$} -- ++(0,4);
\draw[thick] (6,0) node[below] {$H_3$} -- ++(0,4);
\draw[thick] (8,0) node[below] {$H_2$} -- ++(-7,3.5);
\draw[thick] (10,0) node[below] {$H_1$} -- ++(-8,4);

\draw[dashed] (0,0.5)--(10,0.5) node[above] {$F$}; 

\end{tikzpicture}
\caption{The arrangement $X_2$} 
\label{fig:falk}
\end{figure}
Let $\gamma_i$ ($i=1, \dots, 6$) be the meridian cycle of $H_i$ 
on the complexified generic line $F\otimes\C$. Then the fundamental group 
$\pi_1(M(X_2))$ is generated by $\gamma_1, \dots, \gamma_6$ 
with the following relations. 
\[
\begin{split}
&
[\gamma_2, \gamma_3]=1, 
[\gamma_4, \gamma_5]=1, 
[\gamma_2, \gamma_5]=1, 
[\gamma_1, \gamma_6]=1, 
[\gamma_3, \gamma_6]=1, 
[\gamma_1, \gamma_4]=1\\
&
\gamma_1\gamma_3\gamma_5=
\gamma_3\gamma_5\gamma_1=
\gamma_5\gamma_1\gamma_3, \ \ \ 
\gamma_2\gamma_4\gamma_6=
\gamma_4\gamma_6\gamma_2=
\gamma_6\gamma_2\gamma_4.  
\end{split}
\]
From these relations, we can prove that the three elements 
$\gamma_2, [\gamma_3, \gamma_4]$ and $\gamma_5$ are 
mutually commutative. Hence the fundamental group 
$\pi_1(M(X_2))$ contains a subgroup $G$ which is isomorphic to 
the free abelian group $\Z^3$. This implies that the 
complement $M=M(X_2)$ is not a $K(\pi, 1)$-space 
(unpublished work by L. Paris \cite{fr-hom2}) . 
Indeed, if $M$ is $K(\pi, 1)$, there exists a free action 
of $G=\Z^3$ on the universal covering $\widetilde{M}$. 
Note that $M$ is homotopy equivalent to a $2$-dimensional 
CW-complex. Hence $\widetilde{M}/G$, the $K(G, 1)$ space, 
is homotopic to $2$-dimensional CW complex. In particular, 
$H_3(\widetilde{M}/G, \Z)=0$. 
However, this 
contradicts the fact that $K(\Z^3, 1)$ space has homotopy type 
of the $3$-torus $(S^1)^3$, which has a non-zero third homology group. 

Let $\A=cX_2$ be the coning of $X_2$. 
Using Proposition \ref{prop:sink}, we can check that $cX_2$ 
satisfies the condition (\ref{eq:equal}) $\Sigma_2=\Sigma_3$. 
Thus the condition (\ref{eq:equal}) does not imply 
$K(\pi, 1)$-property. 
\end{example}

\begin{question}
Can one detect the non-triviality of the homotopy group 
$\pi_k(M(cX_2))$ by using the twisted Hurewicz maps? 
\end{question}

\medskip

\noindent
\emph{Acknowledgements.} 
The construction of the sphere in this paper was first 
presented in the conference 
``Hot Topics: Artin Groups and Arrangements - 
Topology, Geometry, and Combinatorics'' at SLMath, Berkeley, 
March 2024. 
The author deeply appreciates many comments by the audience. 
In particular, he thanks Mike Falk on Example \ref{ex:falk} and 
Grant Barkley for references. 
This work was partially supported by JSPS KAKENHI 
Grant Numbers JP18H01115, JP23H00081, and JP21H00975.

\end{document}